\documentclass{amsart}
\usepackage{graphicx,amssymb,amscd,graphics,verbatim,mathrsfs,latexsym,amsmath,amsthm,eucal}
\usepackage{latexsym}
\usepackage{color}
\usepackage{stmaryrd} 
\usepackage{amsfonts}
\usepackage{amssymb, mathrsfs, amsfonts, amsmath}
\usepackage{amsbsy}
\usepackage{amsfonts}
\setlength{\topmargin}{0.0in}
\setlength{\textheight}{8.5in} \setlength{\textwidth}{5.5in}

\newtheorem{corollary}{Corollary}

\newtheorem{definition}{Definition}
\newtheorem{lemma}{Lemma}
\newtheorem{proposition}{Proposition}

\newtheorem{theorem}{Theorem}

\numberwithin{equation}{section}
\newtheorem{question}{Question}

\theoremstyle{definition}

\numberwithin{equation}{section}

\title[Critical metrics]{Rigidity for critical metrics of the volume functional}

\author{A. Barros $^{1}$}
\author{A. Da  Silva$^{2}$}
\address{$^{1}$ Departamento de Matem\'{a}tica-Universidade Federal do Cear\'{a}\\
	60455-760-Fortaleza-CE-BR} \email{abbarros@mat.ufc.br}
\address{$^{2}$Permanent: Faculdade de Matem\'atica - Universidade Federal do Par\'a, 66075-110- Bel\'em-PA-BR.
	Current: Departamento de Matem\'{a}tica-Universidade Federal do Cear\'{a}\\
	60455-760-Fortaleza-CE-BR} \email{adamsilva@ufpa.br}
\thanks{$^{1}$ Partially supported by CNPq-Brazil}
\thanks{$^{2}$ Partially supported by CAPES-Brazil}

\subjclass[2010]{Primary 53C25, 53C24, 53C20; Secondary 53C21}
\keywords{Volume functional; critical metrics; space form; geodesic ball;}

\begin{document}

\newcommand{\spacing}[1]{\renewcommand{\baselinestretch}{#1}\large\normalsize}
\spacing{1.2}

\begin{abstract}
Geodesic balls in a simply connected space forms $\mathbb{S}^n$, $\mathbb{R}^{n}$ or $\mathbb{H}^{n}$ are distinguished manifolds for comparison in bounded Riemannian geometry. In this paper we show that they have the maximum possible boundary volume among Miao-Tam critical metrics with connected boundary provided that the boundary of the manifold is an Einstein hypersurface. In the same spirit we also extend a rigidity theorem due to Boucher et al. \cite{Bou} and  Shen \cite{Shen} to $n$-dimensional static metrics with positive constant scalar curvature, which provides another proof of a partial answer to the Cosmic no-hair conjecture previously obtained by Chru\'sciel \cite{Chrus}.
\end{abstract}

\maketitle

\section{\textbf{Introduction}}
\label{intro}

Let $(M^{n}, g)$ be a connected, compact Riemannian manifold of dimension $n\geq 3$ with connected smooth boundary $\Sigma =\partial M$ and a fixed boundary metric $\gamma$. The purpose of this paper is to present some rigidity results for critical points of the volume functional $V: \mathcal{M}_{\gamma}^{R}\rightarrow \mathbb{R}$, where $\mathcal{M}_{\gamma}^{R}$ is the space of metrics on $M$ which has constant scalar curvature $R$ as well as induced metric on $\Sigma$ given by $\gamma$. In recent years some important results were obtained about critical points of $V$, for instance, Miao and Tam \cite{miaotam} showed that if $g\in \mathcal{M}_{\gamma}^{R}$ has the property that the first Dirichlet eigenvalue of $(n-1)\Delta_{g} + R$ is positive, then $g$ is a critical point of $V$ restricted to the space $\mathcal{M}_{\gamma}^{R}$ if and only if there exists a smooth function $f$ on $M$ satisfying the following system of PDE's
\begin{equation}\label{eqMiaoTam1}
-(\Delta_{g} f)g+\nabla^{2}_{g} f-fRic_{g}=g
\end{equation}with $f|_{\Sigma}=0$, where $Ric_{g},$ $\Delta_{g}$ and $\nabla^{2}_{g} $ stand, respectively, for the Ricci tensor, the Laplacian operator and the Hessian form associated to $g$ on $M^n$. In the same paper they also proved that among compact domains $\Omega$  in simply connected space forms, geodesic balls are the only critical metrics of $V$ ( if $\Omega\subset \mathbb{S}^{n} ,$ one assumes $V(\Omega)< \frac{1}{2}V(\mathbb{S}^{n}))$. It should be noted that height functions, up to constants, solve Eq. (\ref{eqMiaoTam1}) on geodesic balls in simply connected space forms, see e.g. \cite{BalRi2,BDRR} or \cite{miaotam}.

From now on we will follow the terminology used in \cite{BDR}. We recall the definition of Miao-Tam critical metrics.

\begin{definition}\label{DefMiaoTam}
\label{def1} A Miao-Tam critical metric is a 3-tuple $(M^n,\,g,\,f),$ where $(M^{n},\,g),$ is a compact Riemannian manifold of dimension at least three with a smooth boundary $\Sigma$ and $f: M^{n}\to \Bbb{R}$ is a smooth function such that $f^{-1}(0)=\Sigma$ satisfying the overdetermined-elliptic system
\begin{equation}
\label{eqMiaoTam} \mathfrak{L}_{g}^{*}(f)=g.
\end{equation} Here, $\mathfrak{L}_{g}^{*}$ is the formal $L^{2}$-adjoint of the linearization of the scalar curvature operator $\mathfrak{L}_{g}$ given by $\mathfrak{L}_{g}^{*}(f)=-(\Delta_{g} f)g+\nabla^{2}_{g}f-fRic_{g}$. Such a function $f$ is called potential function.
\end{definition}

We point out that if $f:M\to \Bbb{R}$  satisfies (\ref{eqMiaoTam}) then the scalar curvature of $(M^n,g)$ must be constant (cf. Proposition 2.1 in \cite{CM} and Theorem 7 in \cite{miaotam}). In fact, taking the divergence of Eq. (\ref{eqMiaoTam}) we get $f\nabla R_{g} =0.$ So, if $\nabla R_{g}(p) \neq 0$ for some $p\in M$ then $\nabla R_{g} \neq 0$ in a non-empty open set $U\subset M$. Thus $f\equiv 0$ in $U$ which can not occur by Eq. (\ref{eqMiaoTam}). Therefore, $R_{g}$ is constant on $M$. Whence, we may assume that $R_{g}=n(n-1)\varepsilon ,$ where $\varepsilon = -1, 0, 1.$ Moreover, in order to avoid strongly notation we denote the quantities related to the metric g without the subscript $g.$

Motivated by ideas developed by Kobayashi \cite{Kob}, Miao and Tam in \cite{miaotamEinst} sought answers to the following question:
\begin{question}\label{question}
Under which condition are the geodesic balls in a simply connected space form $\mathbb{R}^n$, $\mathbb{S}^{n}$ or $\mathbb{H}^n$ the only Miao-Tam critical metrics?
\end{question}

 In this context, they proved that a locally conformally flat, simply connected Miao-Tam critical metric $(M^n, g, f)$ with boundary isometric to a standard sphere $\mathbb{S}^{n-1}$ must be isometric to a geodesic ball in a simply connected space form $\mathbb{R}^n$, $\mathbb{S}^{n}$ or $\mathbb{H}^n$. In low dimensions $(n=3, 4)$, Barros et al. \cite{BDR} improved this theorem by showing that the assumption of locally conformally flat can be replaced by the weaker condition of Bach-flat. In \cite{miaotamEinst}, Miao and Tam also showed that the assumption of the boundary to be isometric to a standard sphere $\mathbb{S}^{n-1}$ can be removed provided that $(M^n, g)$ is an Einstein manifold. This theorem plays a crucial role in this article.

\begin{theorem}[Miao-Tam, \cite{miaotamEinst}]\label{thmEinst}
Let $(M^n, g, f)$ be a connected, compact Einstein Miao-Tam critical metric with smooth boundary $\Sigma$. Then $(M^n, g)$ is isometric to a geodesic ball in a simply connected space form $\mathbb{R}^n$, $\mathbb{S}^n$ or $\mathbb{H}^n$.
\end{theorem}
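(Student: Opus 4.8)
The plan is to convert the Einstein hypothesis into a single Obata-type equation for $f$ and then read off the geometry from the resulting concircular gradient field. First I would substitute the Einstein condition $Ric = \frac{R}{n} g = (n-1)\varepsilon g$ (recall $R = n(n-1)\varepsilon$) into \eqref{eqMiaoTam}, which isolates the Hessian as $\nabla^{2} f = \bigl(1 + \Delta f + (n-1)\varepsilon f\bigr) g$. Taking the trace gives $\Delta f = -\frac{n}{n-1} - n\varepsilon f$, and feeding this back yields the scalar equation
\begin{equation}\label{eqObata}
\nabla^{2} f = \Bigl(-\varepsilon f - \tfrac{1}{n-1}\Bigr)\, g =: \phi(f)\, g .
\end{equation}
Thus $\nabla f$ is a closed conformal (concircular) vector field, and the whole proof reduces to analyzing \eqref{eqObata} together with the boundary data $f|_{\Sigma} = 0$.

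Next I would pin down the critical set of $f$. Contracting \eqref{eqObata} with $\nabla f$ shows that $G := |\nabla f|^{2} + \varepsilon f^{2} + \frac{2}{n-1}f$ satisfies $\nabla G = 0$, so $|\nabla f|^{2} = -\varepsilon f^{2} - \frac{2}{n-1}f + c$ for a constant $c$; evaluating on $\Sigma$ (where the standard Miao-Tam identities give that $|\nabla f|$ is a nonzero constant) shows $c > 0$ and that $\Sigma$ is a regular level set. Since $f^{-1}(0) = \Sigma$ is connected, $f$ has a fixed sign in the interior, say $f > 0$, so its maximum is attained at an interior point and every interior critical point is a local maximum occurring at the value where $|\nabla f|^{2}$ vanishes; at such a point \eqref{eqObata} gives $\nabla^{2} f = \phi\, g$ with $\phi \neq 0$, hence the critical point is nondegenerate. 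A standard connectedness/Morse argument (all interior critical points are maxima at a single critical value, so there is no lower-index critical point to separate two of them) then shows $f$ has exactly one interior critical point $p_{0}$, and the level sets foliate $M \setminus \{p_{0}\}$ by connected (being diffeomorphic to $\Sigma$ under the gradient flow), totally umbilical hypersurfaces.

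Then I would invoke the warped-product structure forced by a concircular field. Parametrizing by arclength $r$ along the unit-speed geodesics tangent to $\nabla f / |\nabla f|$ issuing from $p_{0}$, the metric becomes $g = dr^{2} + h(r)^{2} \bar g$, where $\bar g$ is the induced metric on a level set. The standard Ricci formulas for such a warped product, combined with the Einstein condition, give the radial equation $h'' + \varepsilon h = 0$ and force the fiber to be Einstein; the requirement that the metric close up smoothly at $p_{0}$ (that is, $h(0) = 0$ and $h'(0) = 1$) then identifies $(\Sigma, \bar g)$ with the round unit sphere $\mathbb{S}^{n-1}$. Solving $h'' + \varepsilon h = 0$ under $h(0) = 0,\ h'(0) = 1$ produces $h(r) = \sin r,\ r,\ \sinh r$ according as $\varepsilon = 1, 0, -1$, which are precisely the warping functions of geodesic balls in $\mathbb{S}^{n}$, $\mathbb{R}^{n}$, $\mathbb{H}^{n}$; comparing the two metrics yields the asserted isometry.

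The reduction in the first paragraph and the trigonometric ODE at the end are routine, so I expect the hard part to be the global step: upgrading the local concircular picture to a genuinely radial one with a single smooth center. Concretely, the delicate points are establishing that $\nabla f$ vanishes at exactly one interior point, that the regular level sets are connected (so the foliation is by spheres rather than by components of some other topology), and that the warped product extends smoothly across $p_{0}$ — forcing the fiber to be the round $\mathbb{S}^{n-1}$ rather than merely Einstein. It is here that the sign of $f$, the connectedness of $\Sigma$, and the nondegeneracy supplied by \eqref{eqObata} must be combined carefully; everything else is comparatively mechanical.
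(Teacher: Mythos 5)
The paper offers no proof of this statement --- it is quoted as a known theorem from \cite{miaotamEinst} --- and your argument is essentially the one in that source (and the standard one): the Einstein hypothesis collapses \eqref{eqMiaoTam} to the concircular equation $\nabla^{2}f=\bigl(-\varepsilon f-\tfrac{1}{n-1}\bigr)g$, after which the classification of compact manifolds with boundary admitting such a potential with $f|_{\Sigma}=0$ (the Obata--Reilly warped-product analysis; compare the paper's own use of Theorem B of \cite{Reilly} at the end of the proof of Theorem \ref{ThStatic}) identifies $(M^{n},g)$ with a geodesic ball. Your reduction and first integral $|\nabla f|^{2}+\varepsilon f^{2}+\tfrac{2}{n-1}f=c$ are correct. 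One detail needs patching: the assertion that $\phi=-\varepsilon f-\tfrac{1}{n-1}$ is nonzero at an interior critical point is automatic only for $\varepsilon\in\{0,1\}$; when $\varepsilon=-1$ the value $f=\tfrac{1}{n-1}$ makes $\phi$ vanish, and you must exclude it by noting that along any unit-speed geodesic through such a critical point $u=f\circ\gamma$ solves $u''=u-\tfrac{1}{n-1}$ with $u(0)=\tfrac{1}{n-1}$, $u'(0)=0$, forcing $f$ to be locally (hence, by analyticity, globally) constant, contradicting $f|_{\Sigma}=0$. With that line added, the route is the expected one.
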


Recently, Baltazar and Ribeiro \cite{Balt} were able to improve Theorem \ref{thmEinst} by showing that the Einstein assumption on $(M^n, g)$ can be replaced by the parallel Ricci tensor condition, which is weaker than the former. Another recent results obtained on Miao-Tam critical metrics are volume estimates of the boundary of such manifolds. Before to comment such estimates, let us recall the following definition.

\begin{definition}\label{DefStatic}
	A complete and connected Riemannian manifold $(M^n, g)$ with a (possibly nonempty) boundary $\Sigma$ is said to be static if there exists a non-negative function $f$ on $M$ satisfying
	\begin{equation}\label{static}
	\mathfrak{L}_{g}^{*}(f)=0
	\end{equation}in $M \setminus\Sigma$ and $\Sigma = f^{-1}(0)$. In this case $(M^n, g, f)$ is called a static triple or simply a static metric.
\end{definition}

We remark that Eq. (\ref{static}) appears in General Relativity, where it defines static solutions of Einstein field equations. Corvino et al. \cite{CM} (Proposition 2.1) showed that a static metric also has scalar constant curvature $R$. When the scalar curvature is positive there exists a classic conjecture called \textit{cosmic no-hair conjecture}, formulated by Boucher et al. \cite{Bou} which claims that:

\textit{The only $n$-dimensional compact static triple $(M^n, g, f)$ with positive scalar curvature and connected boundary $\Sigma$ is given by a round hemisphere $\mathbb{S}^{n}_{+}$, where the function $f$ is taken as the height function.}

There are interesting results about this subject, for instance, Kobayashi \cite{Kob} and Lafontaine \cite{Lafont} proved independently that the conjecture is true when $(M^n , g)$ is supposed to be conformally flat; see also \cite{BalRi2} for further related results. Besides, an important answer to this question was obtained by Boucher-Gibbons-Horowitz \cite{Bou} and Shen \cite{Shen} according to the next result.

\begin{theorem}[Boucher-Gibbons-Horowitz \cite{Bou}, Shen \cite{Shen}]\label{thBoub}
	Let $(M^3, g, f)$ be a compact oriented static triple with connected boundary and scalar curvature $6$. Then $\Sigma$ is a two-sphere whose area satisfies the inequality
\begin{equation*}
|\Sigma|\leq 4\pi
\end{equation*}
with the equality holding if and only if  $M$ is isometric to the round hemisphere.
\end{theorem}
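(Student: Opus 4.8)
The plan is to reduce the whole statement to a single global integral identity for $\int_\Sigma Ric(\nu,\nu)\,dA$, obtained by applying the divergence theorem to the vector field $Ric(\nabla f,\cdot)^\sharp$, and then to feed that identity into the Gauss--Bonnet theorem on the closed oriented surface $\Sigma$. The curvature term produced by the identity will carry a definite sign, and that sign is exactly what forces both the topology of $\Sigma$ and the area bound.

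First I would record the pointwise consequences of the static equation in dimension three. Tracing $\mathfrak{L}_g^{*}(f)=0$ with $R=6$ gives $\Delta f=-3f$, hence
\[ \nabla^{2} f = f\,(Ric-3g). \]
Since $f\geq 0$ with $\Sigma=f^{-1}(0)$, a standard ODE/unique-continuation argument (a zero of $\nabla f$ on $\Sigma$ would make $u=f$ solve $u''=a(t)u$ with $u(0)=u'(0)=0$ along every geodesic from that point, forcing $f\equiv 0$) shows $\nabla f\neq 0$ on $\Sigma$, so $\Sigma$ is a regular level set with outward unit normal $\nu=-\nabla f/|\nabla f|$. Differentiating $|\nabla f|^{2}$ tangentially and using that $\nabla^{2}f=0$ on $\Sigma$ (because $f=0$ there) shows $|\nabla f|\equiv c$ is a positive constant on the connected boundary, and the same vanishing of the Hessian forces the second fundamental form to vanish, i.e. $\Sigma$ is totally geodesic. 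The traced Gauss equation for a totally geodesic surface then yields the pointwise relation
\[ K_{\Sigma}=3-Ric(\nu,\nu), \]
where $K_\Sigma$ is the Gauss curvature.

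Second, and this is the heart of the matter, I would compute $\int_{\Sigma}Ric(\nu,\nu)\,dA$ by the divergence theorem applied to $X=Ric(\nabla f,\cdot)^{\sharp}$. The contracted second Bianchi identity $\di\,Ric=\tfrac12\nabla R=0$ together with the Hessian formula gives $\di X=\langle Ric,\nabla^{2}f\rangle=f(|Ric|^{2}-18)$, while on $\Sigma$ one has $\langle X,\nu\rangle=-c\,Ric(\nu,\nu)$. Splitting $|Ric|^{2}=|\mathring{Ric}|^{2}+12$ in dimension three and using $\int_{M}f\,dV=\tfrac{c}{3}|\Sigma|$ (from integrating $\Delta f=-3f$), I expect the clean identity
\[ \int_{\Sigma}Ric(\nu,\nu)\,dA = 2|\Sigma|-\frac1c\int_{M}f\,|\mathring{Ric}|^{2}\,dV. \]
Feeding $K_\Sigma=3-Ric(\nu,\nu)$ into Gauss--Bonnet, $\int_\Sigma K_\Sigma\,dA=2\pi\chi(\Sigma)$, then gives
\[ |\Sigma| = 2\pi\chi(\Sigma)-\frac1c\int_{M}f\,|\mathring{Ric}|^{2}\,dV \leq 2\pi\chi(\Sigma). \]
Since $|\Sigma|>0$ this forces $\chi(\Sigma)>0$, so the oriented closed surface $\Sigma$ is a two-sphere and $|\Sigma|\leq 4\pi$. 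Equality holds iff $\chi(\Sigma)=2$ and $\int_{M}f|\mathring{Ric}|^{2}=0$; as $f>0$ in the interior the latter means $(M^{3},g)$ is Einstein, hence of constant sectional curvature $1$, and then $\nabla^{2}f=-fg$ is Obata's equation, so $M$ is the round hemisphere with $f$ the height function.

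The main obstacle is producing that boxed identity with a term of definite sign. The boundary is so rigid that every purely local computation at $\Sigma$ (normal derivatives of $f$ and of $\Delta f$, the Riccati equation for the level sets of $f$) collapses to a tautology, so no local estimate can detect the area; the content has to come from a genuinely global integration by parts. The decisive algebraic fact is that in dimension three $|Ric|^{2}-18=|\mathring{Ric}|^{2}-6$, so the bulk integrand pairs against the positive weight $f$ to give exactly the Einstein defect. Getting the orientation of $\nu$, the sign of $\partial_\nu f$, and the normalization of $c$ all consistent, so that the bulk term lands on the correct side of the Gauss--Bonnet balance, is the delicate part of the argument.
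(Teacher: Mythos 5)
Your proof is correct, but note that the paper does not actually prove this statement: Theorem \ref{thBoub} is quoted from Boucher--Gibbons--Horowitz and Shen, and the paper's own work is the $n\geq 4$ generalization (Theorem \ref{ThStatic}). Your argument is in fact the classical three-dimensional proof, and it coincides exactly with what the paper's machinery gives when specialized to $n=3$: your divergence-theorem identity for $X=Ric(\nabla f,\cdot)^{\sharp}$ is Lemma \ref{intfE}, and your boxed identity, rewritten via the traced Gauss equation $2Ric(\nu,\nu)=R-R^{\Sigma}$, is precisely Proposition \ref{intRboundStatic} with $n=3$, namely $\int_{\Sigma}R^{\Sigma}\,ds=\frac{2}{|\nabla f|}\int_{M}f|\mathring{Ric}|^{2}\,dV+2|\Sigma|$. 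The only structural difference from the paper's proof of the higher-dimensional version is the final step: you convert $\int_{\Sigma}R^{\Sigma}\,ds$ into $4\pi\chi(\Sigma)$ by Gauss--Bonnet, which simultaneously pins down the topology ($\chi(\Sigma)>0$, hence $\Sigma\cong\mathbb{S}^{2}$) and gives the sharp constant $4\pi$; for $n\geq 4$ no such topological tool exists, and the paper substitutes the Ilias Sobolev inequality applied to the Einstein boundary to get the bound $Y(\mathbb{S}^{n-1},[g_{can}])\geq R^{\Sigma}|\Sigma|^{2/(n-1)}$ --- this is exactly why the Einstein hypothesis on $\Sigma$ appears in Theorem \ref{ThStatic} but is vacuous here. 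All your intermediate computations check out ($\Delta f=-3f$, $|\nabla f|$ constant and nonzero on $\Sigma$, $\Sigma$ totally geodesic, $K_{\Sigma}=3-Ric(\nu,\nu)$, $\int_{M}f\,dV=\frac{c}{3}|\Sigma|$, and the sign bookkeeping in the boundary term). The one step you state a bit quickly is the equality case: the Obata equation $\nabla^{2}f=-fg$ with $f|_{\Sigma}=0$ on a manifold \emph{with boundary} requires the boundary version of Obata's theorem (Reilly's Theorem B, which is what the paper invokes at the corresponding point of its own rigidity argument), not the closed-manifold statement; with that citation supplied, the proof is complete.
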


Inspired by the inequality of the last theorem, with the additional condition of the boundary isometric to a sphere $\mathbb{S}^{n-1}(r)$ of radius $r>0$, Chru\'sciel \cite{Chrus} concluded that $r\leq \sqrt{\frac{n}{\Lambda}}$, where $\Lambda >0$ denotes the cosmological constant of the static triple $(M^n, g, f)$. Moreover, if $r= \sqrt{\frac{n}{\Lambda}}$, then $M^n$ is given by a round hemisphere $\mathbb{S}^{n}_{+}(\sqrt{n/\Lambda})$.

Hijazi, Montiel and Raulot \cite{Hija} established a similar result that involves an inequality for the first eigenvalue of the induced Dirac operator of each boundary component. That is, they proved that the de Sitter spacetime minimizes the absolute value of the modes of the Dirac operator on each boundary component among all  static triples with positive scalar curvature.

Batista, Di\'ogenes, Ranieri and Ribeiro \cite{BDRR} obtained a natural extension of Theorem \ref{thBoub} to Miao-Tam critical metrics. More precisely, in \cite{BDRR} it was proved the following result: \textit{Let $(M^3, g, f)$ be a compact, oriented, Miao-Tam critical metric with connected boundary $\Sigma$ and nonnegative scalar curvature. Then $\Sigma$ is a 2-sphere and
 \begin{equation}\label{estMarcos}
 |\Sigma|\leq \frac{4\pi}{C(R)},
 \end{equation}
where $C(R)= \frac{R}{6} +\frac{1}{4|\nabla f|^2}$ is constant. Moreover, the equality in $(\ref{estMarcos})$ holds if and only if $(M^3, g)$ is isometric to a geodesic ball in a simply connected space form $\mathbb{R}^3$ or $\mathbb{S}^3$.}\\

Afterward, Barbosa et al. \cite{BLF} showed that this result also holds for  negative scalar curvature  provided that the mean curvature of the boundary satisfies $H>2$. Furthermore, they extended this result to $5$-dimensional Miao-Tam critical metrics assuming that the boundary $\Sigma$ to be Einstein. Corvino, Eichmair and Miao \cite{CM} and Yuan \cite{Yuan} also obtained interesting volume estimates for critical metrics of the volume functional.

The goal of this paper is to extend for dimension $n\ge 4$ the estimates obtained in \cite{BDRR} and  \cite{BLF} for Miao-Tam critical metrics as well as the remarkable result due to   Boucher, Gibbons and Horowitz \cite{Bou} and Shen \cite{Shen} for static metrics. Thus, with these estimates we can get important answers to question \ref{question} and to the cosmic no-hair conjecture.  Before to announce our main theorem we recall basic facts about the Yamabe constant.

Let $(M^n ,g)$ be a closed smooth manifold of dimension $n$ and denote by $[g]$ the conformal class of a Riemannian metric $g$ on $M$. The Yamabe constant $Y(M,[g])$ of $[g]$ is the infimum of the normalized total scalar curvature functional restricted to $[g]$:

\begin{equation*}
Y(M, [g]) = \inf_{\tilde{g}\in [g]} \frac{\int_{M} R_{\tilde{g}}dV_{\tilde{g}}} {(\int_{M}dV_{\tilde{g}})^{\frac{n-2}{n}}},
\end{equation*}
where $R_{\tilde{g}}$ is the scalar curvature of $\tilde{g}$ and $dV_{\tilde{g}}$ its volume element. Furthermore, the Yamabe constant can be expressed in terms of positive smooth functions (taking $\tilde{g}= \varphi^{\frac{4}{n-2}}g$, $\varphi$ positive and smooth on $M$):

\begin{equation*}
Y(M, [g])= \inf_{\varphi\in C_{+}^{\infty}(M)}\frac{\int_{M} (4\frac{n-1}{n-2}|\nabla \varphi|^2 + R\varphi^2) dV_{g} }{(\int_{M} |\varphi|^{\frac{2n}{n-2}})^{\frac{n-2}{n}}}.
\end{equation*}

The infimum always exists by a fundamental theorem obtained in several steps by Yamabe \cite{Yam}, Trudinger \cite{Trud}, Aubin \cite{Aub} and Schoen \cite{Scho}. It is not difficult to verify that the Yamabe constant of a standard sphere $\mathbb{S}^{n}$ is given by
\begin{equation}
\label{yamabesn}
Y(\mathbb{S}^{n}, [g_{can}]) = n(n-1)\omega_{n}^{2/n},
\end{equation}where $\omega_{n}$ denotes the volume of the standard unit sphere $\mathbb{S}^{n}$.

Now we are in position to state our main result. First of all we will prove that geodesic balls in a simply connected space forms $\mathbb{S}^n$, $\mathbb{R}^{n}$ or $\mathbb{H}^{n}$ have the maximum possible boundary volume among Miao-Tam critical metrics with connected boundary $\Sigma$ provided that the boundary $\Sigma$ is an Einstein hypersurface. More precisely, the following theorem holds.

\begin{theorem}\label{ThAreaEst}
	Let $(M^n, g, f), \,n\geq 4,$ be a compact, oriented, Miao-Tam critical metric with connected boundary $\Sigma$,  and scalar curvature $R=n(n-1)\varepsilon $, where $\varepsilon = -1, 0, 1$. Suppose that the boundary $\Sigma$ is an Einstein manifold with positive scalar curvature $R^{\Sigma}$. In addition, if $\varepsilon = -1,$ assume that the mean curvature of $\Sigma$ satisfies $H> n-1.$ Then we have
	\begin{equation}\label{boundM}
		|\Sigma|^{\frac{2}{n-1}} \leq \frac{Y(\mathbb{S}^{n-1}, [g_{can}])}{C(R)},
	\end{equation}
	where $C(R)=\frac{n-2}{n}R+\frac{n-2}{n-1}H^{2}$ is a positive constant. Moreover, the equality in $(\ref{boundM})$ holds if and only if $(M^n, g)$ is isometric to a geodesic ball in a simply connected space form $\mathbb{S}^n$, $\mathbb{R}^{n}$ or $\mathbb{H}^n$.
\end{theorem}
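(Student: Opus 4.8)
The plan is to reduce the estimate to the Yamabe invariant of the boundary, after identifying $R^{\Sigma}$ with $C(R)$ up to an inequality that the Einstein hypothesis sharpens.

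First I would record the boundary geometry. Restricting the Miao--Tam equation $\nabla^{2}f-(\Delta f)g-fRic=g$ to $\Sigma=f^{-1}(0)$ and using $f|_{\Sigma}=0$ gives $\nabla^{2}f=-\tfrac{1}{n-1}g$ on $\Sigma$ (the trace yields $\Delta f=-\tfrac{n}{n-1}$ there). Hence $\Sigma$ is totally umbilic; writing $\nu=\nabla f/|\nabla f|$ for the unit normal and $A$ for the second fundamental form, one gets $A=\tfrac{1}{(n-1)|\nabla f|}\gamma$ with respect to the outward normal, so $H=1/|\nabla f|$ and $|A|^{2}=\tfrac{H^{2}}{n-1}$. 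The Gauss equation then reads
\[
R^{\Sigma}=R-2Ric(\nu,\nu)+H^{2}-|A|^{2}=R-2Ric(\nu,\nu)+\tfrac{n-2}{n-1}H^{2}.
\]
Thus $R^{\Sigma}\ge C(R)$ is \emph{equivalent} to the pointwise bound $Ric(\nu,\nu)\le R/n$ on $\Sigma$, and proving this bound is the heart of the matter.

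The device I would use is the auxiliary function $W=|\nabla f|^{2}+\tfrac{R}{n(n-1)}f^{2}+\tfrac{2}{n-1}f$. A direct computation using $\Delta f=-\tfrac{fR+n}{n-1}$ and the contracted second Bianchi identity (so $\operatorname{div}Ric=\tfrac12\nabla R=0$) gives $\nabla W=2f\big(Ric(\nabla f)-\tfrac{R}{n}\nabla f\big)$ and
\[
\Delta W-\tfrac{1}{f}\langle\nabla f,\nabla W\rangle=2f^{2}\Big(|Ric|^{2}-\tfrac{R^{2}}{n}\Big)\ge 0,
\]
the inequality being Cauchy--Schwarz, $|Ric|^{2}\ge R^{2}/n$. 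Since $f>0$ in the interior, the drift $-f^{-1}\nabla f$ is smooth there, so $W$ is a subsolution of an elliptic operator and attains its maximum on $\partial M=\Sigma$. On $\Sigma$ one has $W=|\nabla f|^{2}$ and $\nabla W=0$ (which en passant shows $|\nabla f|$, hence $H$, is constant along the connected boundary); because $\Sigma$ is the maximum set and $\nabla W$ vanishes there, the inward second derivative is nonpositive, and $\nabla^{2}W(\nu,\nu)|_{\Sigma}=2|\nabla f|^{2}\big(Ric(\nu,\nu)-\tfrac{R}{n}\big)\le 0$ yields exactly $Ric(\nu,\nu)\le R/n$, i.e. $R^{\Sigma}\ge C(R)$. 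One checks $C(R)>0$ from the hypotheses (automatic for $\varepsilon=0,1$, and equal to $\tfrac{n-2}{n-1}(H^{2}-(n-1)^{2})>0$ for $\varepsilon=-1$ precisely because $H>n-1$).

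Finally I would invoke Yamabe theory on $\Sigma$. Since $(\Sigma,\gamma)$ is Einstein with $R^{\Sigma}>0$, by Obata's theorem the metric $\gamma$ realizes the Yamabe invariant of its conformal class, so $Y(\Sigma,[\gamma])=R^{\Sigma}|\Sigma|^{2/(n-1)}$; Aubin's inequality $Y(\Sigma,[\gamma])\le Y(\mathbb{S}^{n-1},[g_{can}])$ together with $R^{\Sigma}\ge C(R)>0$ then gives $C(R)|\Sigma|^{2/(n-1)}\le Y(\mathbb{S}^{n-1},[g_{can}])$, which is (\ref{boundM}). For rigidity, equality forces both $Ric(\nu,\nu)=R/n$ on $\Sigma$ and equality in Aubin's inequality; tracing the maximum-principle step backwards (the second-order contact of $W$ with its maximum along all of $\Sigma$) should force $W\equiv\mathrm{const}$, hence $|Ric|^{2}\equiv R^{2}/n$ and $Ric=\tfrac{R}{n}g$, so $M$ is Einstein and Theorem \ref{thmEinst} identifies it with a geodesic ball in $\mathbb{S}^{n}$, $\mathbb{R}^{n}$ or $\mathbb{H}^{n}$; the converse is the known height-function computation. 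The main obstacle is the key inequality $Ric(\nu,\nu)\le R/n$: making the maximum principle rigorous despite the drift $-f^{-1}\nabla f$ blowing up at $\Sigma$ (handled on $\{f\ge\varepsilon\}$ with $\varepsilon\downarrow 0$) and extracting the correct pointwise sign from the degenerate second-order boundary analysis, together with the parallel rigidity argument in the equality case.
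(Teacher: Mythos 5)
Your route to the inequality itself is genuinely different from the paper's and, as far as the inequality goes, it checks out. The paper never proves a pointwise bound on $R^{\Sigma}$: it integrates the identity $f|\mathring{Ric}|^2=\operatorname{div}(\mathring{Ric}(\nabla f))$ over $M$ to get the purely integral statement $\int_{\Sigma}R^{\Sigma}\,ds=2H\int_{M}f|\mathring{Ric}|^2\,dV+C(R)|\Sigma|\ge C(R)|\Sigma|$ (Proposition \ref{intRbound}), and then combines this with $R^{\Sigma}|\Sigma|^{2/(n-1)}\le Y(\mathbb{S}^{n-1},[g_{can}])$, which it extracts from Ilias's sharp Sobolev inequality (essentially Bishop volume comparison under $Ric^{\Sigma}\ge(n-2)\delta$). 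You instead prove the pointwise bound $Ric(\nu,\nu)\le R/n$ on $\Sigma$ via the auxiliary function $W=|\nabla f|^2+\tfrac{R}{n(n-1)}f^2+\tfrac{2}{n-1}f$, and obtain $R^{\Sigma}|\Sigma|^{2/(n-1)}=Y(\Sigma,[\gamma])\le Y(\mathbb{S}^{n-1},[g_{can}])$ from Obata plus Aubin. I verified your identities: indeed $\nabla W=2f\,\mathring{Ric}(\nabla f)$ and $\Delta W-\tfrac{1}{f}\langle\nabla f,\nabla W\rangle=2f^{2}|\mathring{Ric}|^{2}\ge0$, the exhaustion by $\{f\ge\varepsilon\}$ handles the singular drift for the weak maximum principle, and the second-order boundary test gives $\nabla^{2}W(\nu,\nu)=2|\nabla f|^{2}(Ric(\nu,\nu)-R/n)\le0$. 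Your approach buys more (a pointwise boundary curvature bound) at the cost of a harder analytic step; the paper's integral route is softer and shorter.

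The genuine gap is in the equality case. You write that tracing the maximum principle backwards ``should force $W\equiv\mathrm{const}$,'' but this does not follow from anything you have set up. Since $\nabla W=2f\,\mathring{Ric}(\nabla f)$ vanishes identically on $\Sigma$, the Hopf boundary point lemma has no content here, and in any case it is not applicable: the drift $-f^{-1}\nabla f$ blows up like $1/\mathrm{dist}(\cdot,\Sigma)$, and for the model operator $u''-t^{-1}u'$ on $(0,1)$ the non-constant subsolution $u=c-t^{2}$ attains its maximum at $t=0$ with $u'(0)=0$, so vanishing of the first normal derivative at a boundary maximum genuinely does not force constancy. Knowing additionally that $\nabla^{2}W(\nu,\nu)=0$ on $\Sigma$ (which is all that equality gives you) still does not yield $W\equiv\mathrm{const}$ by any standard maximum principle. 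Fortunately the repair is immediate and is exactly the paper's mechanism: the divergence theorem applied to $\operatorname{div}(\mathring{Ric}(\nabla f))=f|\mathring{Ric}|^{2}$ gives
\begin{equation*}
\int_{M}f|\mathring{Ric}|^{2}\,dV_{g}=-|\nabla f|\int_{\Sigma}\Big(Ric(\nu,\nu)-\frac{R}{n}\Big)\,ds,
\end{equation*}
so your pointwise conclusion $Ric(\nu,\nu)=R/n$ on $\Sigma$ in the equality case forces $\int_{M}f|\mathring{Ric}|^{2}=0$, hence $\mathring{Ric}\equiv0$ (as $f>0$ in the interior), and Theorem \ref{thmEinst} finishes the rigidity. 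You should replace the ``$W\equiv\mathrm{const}$'' claim by this integral argument; as written, that step would not survive scrutiny.
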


According to Theorem \ref{ThAreaEst} we obtain a partial answer to the question of Miao and Tam in \cite{miaotamEinst} (Question \ref{question}). More precisely, we have the following result.
\begin{corollary}\label{CorAnswerMiao}
Let $(M^n, g, f), \,n\geq 4,$ be a compact, oriented, Miao-Tam critical metric with connected boundary $\Sigma$ isometric to a sphere $\mathbb{S}^{n-1}(r)$ of radius $r=\Big(\frac{(n-1)(n-2)}{C(R)}\Big)^{1/2}$, and scalar curvature $R=n(n-1)\varepsilon $, where $\varepsilon = -1, 0, 1$. In addition, if $\varepsilon = -1,$ assume that the mean curvature of $\Sigma$ satisfies $H> n-1$. Then $(M^n, g)$ is isometric to a geodesic ball in a simply connected space form $\mathbb{S}^n$, $\mathbb{R}^{n}$ or $\mathbb{H}^n$.
\end{corollary}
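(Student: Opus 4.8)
The plan is to derive the corollary directly from Theorem \ref{ThAreaEst}, the entire content being the observation that the prescribed radius is precisely the one that saturates the boundary-volume estimate (\ref{boundM}).

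First I would check that the hypotheses of Theorem \ref{ThAreaEst} are met. Since $\Sigma$ is isometric to a round sphere $\mathbb{S}^{n-1}(r)$, it has constant sectional curvature $1/r^{2}$; hence it is an Einstein manifold with $Ric^{\Sigma}=\frac{n-2}{r^{2}}\,g^{\Sigma}$ and scalar curvature $R^{\Sigma}=\frac{(n-1)(n-2)}{r^{2}}>0$. As $n\geq 4$ forces $\dim\Sigma=n-1\geq 3$, this is a genuine Einstein condition, so together with the assumption $H>n-1$ in the case $\varepsilon=-1$, all hypotheses of Theorem \ref{ThAreaEst} hold and the inequality (\ref{boundM}) is available.

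Next I would compute both sides of (\ref{boundM}) explicitly for the round sphere. The volume of $\Sigma\cong\mathbb{S}^{n-1}(r)$ is $|\Sigma|=r^{n-1}\omega_{n-1}$, where $\omega_{n-1}$ denotes the volume of the unit sphere $\mathbb{S}^{n-1}$, so that $|\Sigma|^{\frac{2}{n-1}}=r^{2}\,\omega_{n-1}^{\frac{2}{n-1}}$. On the other hand, applying (\ref{yamabesn}) in dimension $n-1$ gives $Y(\mathbb{S}^{n-1},[g_{can}])=(n-1)(n-2)\,\omega_{n-1}^{\frac{2}{n-1}}$. Substituting the prescribed value $r=\big(\frac{(n-1)(n-2)}{C(R)}\big)^{1/2}$, equivalently $r^{2}=\frac{(n-1)(n-2)}{C(R)}$, I would verify that
\[
|\Sigma|^{\frac{2}{n-1}}=\frac{(n-1)(n-2)}{C(R)}\,\omega_{n-1}^{\frac{2}{n-1}}=\frac{(n-1)(n-2)\,\omega_{n-1}^{\frac{2}{n-1}}}{C(R)}=\frac{Y(\mathbb{S}^{n-1},[g_{can}])}{C(R)},
\]
which is exactly the equality case of (\ref{boundM}).

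Finally, since Theorem \ref{ThAreaEst} asserts that equality in (\ref{boundM}) holds if and only if $(M^{n},g)$ is isometric to a geodesic ball in a simply connected space form $\mathbb{S}^{n}$, $\mathbb{R}^{n}$ or $\mathbb{H}^{n}$, the conclusion follows immediately. There is no serious obstacle in this argument: it amounts to bookkeeping with the constant $\omega_{n-1}$, and the only point to keep in mind is that the radius was chosen in the peculiar form $\big(\frac{(n-1)(n-2)}{C(R)}\big)^{1/2}$ precisely so that the round-sphere boundary turns (\ref{boundM}) into an equality, at which point the rigidity half of Theorem \ref{ThAreaEst} does all the work.
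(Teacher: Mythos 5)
Your proposal is correct and follows essentially the same route as the paper: both arguments observe that a round boundary $\mathbb{S}^{n-1}(r)$ with the prescribed radius satisfies $|\Sigma|^{\frac{2}{n-1}}=r^{2}\omega_{n-1}^{\frac{2}{n-1}}=\frac{Y(\mathbb{S}^{n-1},[g_{can}])}{C(R)}$, i.e.\ the radius is exactly the one saturating (\ref{boundM}), and then invoke the rigidity half of Theorem \ref{ThAreaEst}. Your write-up is merely more explicit than the paper's one-line verification (which phrases the same computation as $r=\delta^{-1/2}$).
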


It should be emphasized that under the normalization of the scalar curvature considered in the above corollary the constant $C(R)$ becomes
\begin{equation}
\label{cr}C(R)=\frac{n-2}{n-1}\big(H^{2}+(n-1)^2\varepsilon\big).
\end{equation}

When a Miao-Tam critical metric has nonnegative scalar curvature it is possible to estimate the volume of  $M^n$ according to the next corollary.

\begin{corollary}\label{CorEstVolM}Under the same conditions of Theorem \ref{ThAreaEst}, but $R\ge 0$,  we deduce
	\begin{equation}\label{EstVolM}
		\Big(\frac{nH}{n-1}\Big)^{\frac{2}{n-1}}|M|^{\frac{2}{n-1}}\leq \frac{Y(\mathbb{S}^{n-1}, [g_{can}])}{C(R)}.
	\end{equation} Moreover, the equality in $(\ref{EstVolM})$ holds if and only if $(M^n, g)$ is isometric to a geodesic ball in the Euclidean space $\mathbb{R}^{n}$.
\end{corollary}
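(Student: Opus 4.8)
The plan is to derive the corollary directly from Theorem \ref{ThAreaEst}: since that theorem already controls $|\Sigma|^{2/(n-1)}$, it suffices to relate the interior volume $|M|$ to the boundary volume $|\Sigma|$, and this relation I would obtain by integrating the fundamental equation \eqref{eqMiaoTam}.

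First I would trace \eqref{eqMiaoTam}. Rewriting it as $\nabla^2 f=(1+\Delta f)g+f\,\mathrm{Ric}$ and taking traces yields $-(n-1)\Delta f=n+fR$, that is
\[
\Delta f=-\frac{n+fR}{n-1}.
\]
Next I would integrate this identity over $M$ and apply the divergence theorem, using the boundary information already available from the proof of Theorem \ref{ThAreaEst}: the potential satisfies $f>0$ in the interior, $\nabla f$ is normal to the level set $\Sigma=f^{-1}(0)$, and $|\nabla f|$ is constant along the connected boundary with $|\nabla f|=1/H$. (This last relation comes from evaluating \eqref{eqMiaoTam} on $\Sigma$, where $f=0$ forces $\nabla^2 f=-\tfrac{1}{n-1}g$, so that $\Sigma$ is totally umbilic with $H=\mathrm{tr}(A)=1/|\nabla f|$.) With the outward unit normal $\nu$ one has $\partial f/\partial\nu=-|\nabla f|=-1/H$, so the divergence theorem produces the identity
\[
\frac{|\Sigma|}{H}=\frac{n}{n-1}|M|+\frac{R}{n-1}\int_{M}f\,dV.
\]

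Because $R\ge 0$ and $f>0$, the last term is nonnegative, so $|\Sigma|\ge\frac{nH}{n-1}|M|$, with equality exactly when $R=0$. Raising this to the power $2/(n-1)$ and inserting the bound \eqref{boundM} of Theorem \ref{ThAreaEst}, I would obtain
\[
\Big(\frac{nH}{n-1}\Big)^{\frac{2}{n-1}}|M|^{\frac{2}{n-1}}\le|\Sigma|^{\frac{2}{n-1}}\le\frac{Y(\mathbb{S}^{n-1},[g_{can}])}{C(R)},
\]
which is precisely \eqref{EstVolM}. For the rigidity statement, equality in \eqref{EstVolM} forces equality in both steps simultaneously: equality in the first inequality requires $R=0$ (hence $\varepsilon=0$), while equality in \eqref{boundM} forces, by Theorem \ref{ThAreaEst}, that $(M^n,g)$ be a geodesic ball in a simply connected space form, and the constraint $R=0$ selects $\mathbb{R}^n$. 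Conversely, a Euclidean geodesic ball has $R=0$ and saturates \eqref{boundM}, so it attains equality. I expect the only genuinely delicate points to be fixing the correct sign of $\partial f/\partial\nu$ and observing that the nonnegative curvature term strictly obstructs equality unless $R=0$; the remaining ingredients (positivity of $f$, constancy of $|\nabla f|$ on $\Sigma$, and the relation $|\nabla f|=1/H$) are exactly those furnished by the proof of Theorem \ref{ThAreaEst}.
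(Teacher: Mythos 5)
Your proposal is correct and follows essentially the same route as the paper: integrating the traced equation $(n-1)\Delta f+Rf=-n$ over $M$, applying the divergence theorem with $\partial f/\partial\nu=-1/H$ to get $|\Sigma|=\frac{nH}{n-1}|M|+\frac{RH}{n-1}\int_M f\,dV_g$, discarding the nonnegative term, and chaining with Theorem \ref{ThAreaEst}, with the same equality analysis forcing $R=0$ and the Euclidean ball. No gaps.
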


For static metrics, the next theorem extends the result  due to Boucher-Gibbons-Horowitz \cite{Bou} as well as one due to  Shen \cite{Shen} for arbitrary dimension. More precisely, we will prove the following result.

\begin{theorem}\label{ThStatic}
	Let $(M^{n}, g, f),\,n\geq 4,$ be a compact, oriented static triple with connected boundary $\Sigma$, and positive scalar curvature $R=n(n-1)$. Suppose that the boundary $\Sigma$ is an Einstein manifold with positive scalar curvature $R^{\Sigma}$, then it holds
	\begin{equation}\label{EstStaticTh}
	|\Sigma|\leq \omega_{n-1}.
	\end{equation}
	In addition, the equality in $(\ref{EstStaticTh})$ is attained only for a round hemisphere $\mathbb{S}_{+}^n.$
\end{theorem}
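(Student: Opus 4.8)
The plan is to run the higher-dimensional analogue of Shen's argument \cite{Shen}, replacing the two-dimensional Gauss--Bonnet input by Yamabe/Obata machinery on the Einstein boundary. First I would extract the boundary geometry. Tracing \eqref{static} gives $\Delta f=-\frac{R}{n-1}f=-nf$, and restricting \eqref{static} to $\Sigma=f^{-1}(0)$ (where $f=0$, hence $\Delta f=0$) yields $\nabla^2 f=0$ along $\Sigma$. Thus $\Sigma$ is totally geodesic, so $H=0$ and $|II|^2=0$, while $|\nabla f|=\kappa$ is a positive constant on $\Sigma$ (since $\nabla|\nabla f|^2=2\nabla^2 f(\nabla f,\cdot)=0$ there). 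The Gauss equation for the scalar curvature then collapses to $R^{\Sigma}=R-2Ric(\nu,\nu)=n(n-1)-2Ric(\nu,\nu)$, and because $\Sigma$ is Einstein its scalar curvature $R^{\Sigma}$ is constant; consequently $Ric(\nu,\nu)$ is constant along $\Sigma$.

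The heart of the argument is a global identity controlling this boundary quantity. Writing $T:=Ric-\frac{R}{n}g$ for the trace-free Ricci tensor, one checks that \eqref{static} is equivalent to $\nabla^2 f-\frac{\Delta f}{n}g=fT$; contracting with the trace-free $T$ gives the pointwise identity $\langle\nabla^2 f,T\rangle=f|T|^2$. I would integrate this over $M$, integrate the left-hand side by parts, and use the constancy of $R$ (so that $\di\,T=(\frac12-\frac1n)\nabla R=0$) to annihilate the interior term; with $f|_{\Sigma}=0$ and $\nabla f=-\kappa\nu$ this leaves only
\begin{equation*}
-\kappa\int_{\Sigma}T(\nu,\nu)\,dA=\int_M f\,|T|^2\,dV.
\end{equation*}
Since the potential of a static triple satisfies $f\ge 0$, the right-hand side is nonnegative, whence $\int_{\Sigma}T(\nu,\nu)\,dA\le 0$, i.e. $\int_{\Sigma}Ric(\nu,\nu)\,dA\le\frac{R}{n}|\Sigma|=(n-1)|\Sigma|$. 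As $Ric(\nu,\nu)$ is constant on $\Sigma$, this forces the pointwise bound $Ric(\nu,\nu)\le n-1$, and therefore $R^{\Sigma}=n(n-1)-2Ric(\nu,\nu)\ge (n-1)(n-2)$.

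With this lower bound on $R^{\Sigma}$ the estimate follows from conformal geometry on $\Sigma$. Since $(\Sigma^{n-1},\gamma)$ is Einstein with $R^{\Sigma}>0$, the metric $\gamma$ realizes the Yamabe invariant of its conformal class (Obata), so $Y(\Sigma,[\gamma])=R^{\Sigma}|\Sigma|^{\frac{2}{n-1}}$, while Aubin's inequality gives $Y(\Sigma,[\gamma])\le Y(\mathbb{S}^{n-1},[g_{can}])=(n-1)(n-2)\,\omega_{n-1}^{\frac{2}{n-1}}$. Combining these with $R^{\Sigma}\ge(n-1)(n-2)$ yields
\begin{equation*}
(n-1)(n-2)\,|\Sigma|^{\frac{2}{n-1}}\le R^{\Sigma}|\Sigma|^{\frac{2}{n-1}}=Y(\Sigma,[\gamma])\le (n-1)(n-2)\,\omega_{n-1}^{\frac{2}{n-1}},
\end{equation*}
which is precisely $|\Sigma|\le\omega_{n-1}$.

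For rigidity, equality $|\Sigma|=\omega_{n-1}$ forces equality at both ends of this chain. Equality in the integral identity means $\int_M f|T|^2\,dV=0$; as $f>0$ in the interior this gives $T\equiv 0$, so $(M,g)$ is Einstein with $Ric=(n-1)g$. Substituting this into \eqref{static} produces Obata's equation $\nabla^2 f=-fg$, whose solutions with $f$ vanishing on the boundary force $M$ to be the round hemisphere $\mathbb{S}^n_+$; equality in Aubin's inequality independently forces $(\Sigma,\gamma)$ to be the round $\mathbb{S}^{n-1}$, consistent with this. The converse is the standard fact that the height function on $\mathbb{S}^n_+$ is a static potential. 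I expect the main obstacle to be exactly the passage from the purely local relation $R^{\Sigma}=n(n-1)-2Ric(\nu,\nu)$ to the effective inequality $R^{\Sigma}\ge(n-1)(n-2)$: the boundary form of \eqref{static} determines $II$ but leaves $Ric(\nu,\nu)$ genuinely undetermined, and it is only the global identity above---relying decisively on $\di\,T=0$ and on the sign $f\ge 0$---that controls its boundary average, after which the Einstein hypothesis upgrades this integral bound to the required pointwise one.
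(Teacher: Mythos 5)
Your argument is correct, and its global architecture coincides with the paper's: both rest on the identity $\int_{M}f|\mathring{Ric}|^2\,dV_{g}=-|\nabla f|\int_{\Sigma}\mathring{Ric}(\nu,\nu)\,ds$ (Lemma \ref{intfE}), the Gauss equation on the totally geodesic boundary, the sign $f\geq 0$, and rigidity via the Einstein conclusion together with the overdetermined system $\mathring{\nabla^2 f}=0$, $f|_{\Sigma}=0$ (the paper cites Reilly's Theorem B where you invoke Obata's equation --- the same rigidity statement). The one genuinely different ingredient is how you reach the sharp inequality $R^{\Sigma}|\Sigma|^{\frac{2}{n-1}}\leq(n-1)(n-2)\,\omega_{n-1}^{\frac{2}{n-1}}$: the paper applies Ilias's explicit Sobolev inequality on $\Sigma$ and compares its leading coefficient with the optimal constant $K(n-1,2)$, whereas you observe that a positive Einstein metric is the Yamabe minimizer of its conformal class (Obata), so $Y(\Sigma,[\gamma])=R^{\Sigma}|\Sigma|^{\frac{2}{n-1}}$, and then invoke Aubin's bound $Y(\Sigma,[\gamma])\leq Y(\mathbb{S}^{n-1},[g_{can}])$. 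Your route buys a cleaner and more standard derivation (it avoids the paper's somewhat delicate ``the coefficient must be at least $1$'' comparison of best constants), at the cost of importing Obata's uniqueness theorem and, for the equality discussion in Aubin's inequality, the full resolution of the Yamabe problem; the paper's route is more self-contained given Ilias's theorem. A second, purely cosmetic, difference: you use Schur's lemma (valid since $\dim\Sigma=n-1\geq 3$) to make $Ric(\nu,\nu)$ constant and upgrade the integral bound to the pointwise bound $R^{\Sigma}\geq(n-1)(n-2)$ before dividing, while the paper keeps everything in integrated form via Proposition \ref{intRboundStatic}; since $R^{\Sigma}$ is constant the two presentations are equivalent.
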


Thus, Theorem \ref{ThStatic} provides a partial answer to the cosmic no-hair conjecture formulated by Boucher et al. \cite{Bou}. To be precise, we get an alternative proof to the following corollary previously obtained by Chru\'sciel \cite{Chrus}.

\begin{corollary}\label{CorConjec}
	The Cosmic no-hair conjecture is true when the boundary $\Sigma$ is isometric to a standard sphere $\mathbb{S}^{n-1}$.
\end{corollary}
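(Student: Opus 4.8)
The plan is to obtain Corollary \ref{CorConjec} directly from the rigidity statement of Theorem \ref{ThStatic}, the whole point being to recognize that the standard-sphere hypothesis places us exactly at the equality case. I would work in the normalization $R=n(n-1)$ under which Theorem \ref{ThStatic} is stated; this is the natural de Sitter normalization and is legitimate because, by Corvino--Eichmair--Miao \cite{CM}, a static metric has constant scalar curvature, assumed here to be positive. First I would record the standard fact that the boundary of a static triple is totally geodesic: from $\mathfrak{L}_{g}^{*}(f)=0$ one has $\nabla^{2}f=f\big(\mathrm{Ric}-\tfrac{R}{n-1}g\big)$, so the Hessian of $f$ vanishes on $\Sigma=f^{-1}(0)$; since $\nabla^{2}f(X,Y)=|\nabla f|\,A(X,Y)$ for $X,Y$ tangent to the regular level set $\Sigma$ (where $|\nabla f|\neq 0$), the second fundamental form $A$ and hence the mean curvature $H$ vanish.

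With $\Sigma$ isometric to the standard sphere $\mathbb{S}^{n-1}$ it is Einstein with scalar curvature $R^{\Sigma}=(n-1)(n-2)>0$, so the hypotheses of Theorem \ref{ThStatic} hold and we obtain $|\Sigma|\leq\omega_{n-1}$. The decisive observation is that this is an equality: the standard unit sphere has volume exactly $\omega_{n-1}$, hence $|\Sigma|=\omega_{n-1}$. This is consistent with the general mechanism behind \eqref{boundM}: for a static triple we have $H=0$ and $\varepsilon=1$, so by \eqref{cr} the constant is $C(R)=(n-1)(n-2)$, the right-hand side of \eqref{boundM} reduces to $\omega_{n-1}^{2/(n-1)}$, and the critical boundary radius of Corollary \ref{CorAnswerMiao} is precisely $r=1$; thus a standard \emph{unit} sphere is exactly the saturating configuration. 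Invoking the rigidity clause of Theorem \ref{ThStatic}, equality forces $(M^{n},g)$ to be isometric to a round hemisphere $\mathbb{S}^{n}_{+}$, and choosing $f$ to be the height function displays it as the static triple predicted by the cosmic no-hair conjecture.

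The main obstacle is conceptual rather than computational: one must make certain that the hypotheses fall on the equality case of \eqref{EstStaticTh} and not on a strict inequality. Concretely, this requires checking that total geodesy of $\Sigma$ (giving $H=0$) together with the normalization $R=n(n-1)$ makes the standard unit sphere saturate the area bound---equivalently, that the induced scalar curvature is forced to be $R^{\Sigma}=(n-1)(n-2)$. Once this is in place the proof is immediate, since everything is then delivered by the rigidity part of Theorem \ref{ThStatic}; and because the standard unit sphere is exactly the boundary of $\mathbb{S}^{n}_{+}$ under this normalization, the argument furnishes an alternative proof of the partial answer to the cosmic no-hair conjecture established by Chru\'sciel \cite{Chrus}.
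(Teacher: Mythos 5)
Your proposal is correct and follows essentially the same route as the paper, which offers no separate written proof of this corollary but clearly intends it as the immediate equality case of Theorem \ref{ThStatic}: a standard unit $\mathbb{S}^{n-1}$ is Einstein with $R^{\Sigma}=(n-1)(n-2)>0$ and has $|\Sigma|=\omega_{n-1}$, so the rigidity clause of \eqref{EstStaticTh} applies at once. Your additional consistency checks (total geodesy of $\Sigma$, $C(R)=(n-1)(n-2)$ when $H=0$) are correct but not needed beyond verifying the hypotheses.
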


Gibbons, Hartnoll and Pope in \cite{Gib} constructed counterexamples to the cosmic no-hair conjecture in the cases $4\leq n\leq 8$. However, in these counterexamples one can find boundary components which are topologically spherical but endowed with non-round metrics and Riemannian products of spheres. So, this does not contradict Corollary \ref{CorConjec}.

\section{\textbf{Preliminaries}}
\label{Preliminaries}

In this section we present some basic facts and results about Miao-Tam critical metrics and static triples that will be useful for the conclusion of our results announced before. First, let $(M^n, g ,f)$ be a connected Miao-Tam critical metric obeying definition \ref{DefMiaoTam}. Since $\Sigma = f^{-1}(0)$ we deduce that $f$ does not change sign on $M$. Without loss of generality may assume $f$ nonnegative, in particular, $f>0$ at the interior of $M$.

We recall that the fundamental equation (\ref{eqMiaoTam1}) can be rewritten in tensorial language as
\begin{equation}\label{eq:tensorial}
-(\Delta f)g_{ij}+\nabla_{i}\nabla_{j}f-fR_{ij}=g_{ij}.
\end{equation}

In particular, tracing this equation we have
\begin{equation}\label{eqtrace}
(n-1)\Delta f+Rf=-n,
\end{equation}
where $R$ stands for the scalar curvature on $M$. Moreover, using (\ref{eqtrace}) we can check that
\begin{equation}
\label{RicHess} f\mathring{Ric}=\mathring{\nabla^{2} f},
\end{equation}where $\mathring{T}$ stands for the traceless of $T.$

As previously stated, we already know that we must have constant scalar curvature which will be assumed $R= n(n-1)\varepsilon$, $\varepsilon = -1, 0, 1$. Note also that $|\nabla f| \neq 0$ on $\Sigma$. It is important to  point out that $f$ and $g$ are analytic according to  Corvino, Eichmair and Miao \cite{CM}. Thus, $f$ cannot vanish identically in a non-empty open set; whence the set of regular points of $f$ is dense in $M$. Moreover, since $f\geq 0$, the outward unit normal $\nu$ to the level set $\Sigma = f^{-1}(0)$ is given by $\nu=-\frac{\nabla f}{|\nabla f|}$. Further,  $|\nabla f|$ is constant on $\Sigma$. In fact, let $X\in T\Sigma$, so using (\ref{eq:tensorial}) and (\ref{eqtrace}) we get
$X(|\nabla f|^2)=2
\nabla^2 f(X, \nabla f)=-\frac{2}{n-1}\langle X, \nabla f \rangle =0.$

Proceeding, let $\{e_{1}, \ldots, e_{n-1}, \nu\}$ be an adapted orthonormal frame on $\Sigma$. The second fundamental form on  $\Sigma$ is given by
$ h_{ij}=\langle\nabla_{e_{i}}\nu, e_{j}\rangle= -\frac{1}{|\nabla f|} \langle\nabla_{e_{i}}\nabla f, e_{j}\rangle$, which gives
\begin{equation}\label{2ff1}
h_{ij}= \frac{1}{(n-1)|\nabla f|}g_{ij},
\end{equation}for $i,j=1,\ldots, n-1$. So, the boundary $\Sigma$ is totally umbilical with positive constant mean curvature given by
\begin{equation}\label{meancurv}
H=\frac{1}{|\nabla f|}.
\end{equation}

On the other hand, the Gauss equation for $\Sigma$ reads as follows
\begin{equation}
R^{\Sigma}_{ijkl}=R_{ijkl}-h_{il}h_{jk}+h_{ik}h_{jl}.
\end{equation}

Now, taking the trace in the coordinates $j,l$ and using (\ref{2ff1}) we get
\begin{equation}\label{EqGausRic}
R^{\Sigma}_{ik}=R_{ik} - R_{i\nu k\nu} + \frac{n-2}{(n-1)^2|\nabla f|^2}g_{ik}.
\end{equation}

Again, taking the trace in (\ref{EqGausRic}) we infer
\begin{equation}\label{eqGauss}
2Ric(\nu, \nu) + R^{\Sigma}= R + \frac{n-2}{n-1}H^2,
\end{equation}
where $R^{\Sigma}$ stands for the scalar curvature of $\Sigma$.

Next we deal with static triples. But, we will avoid details because the main results about such metrics follow mutatis mutandis as in the former case.

The fundamental equation (\ref{static}) for a static triple is given by
\begin{equation}\label{eqfundstatic}
-(\Delta f)g_{ij}+\nabla_{i}\nabla_{j}f-fR_{ij}=0,
\end{equation} whose trace gives
\begin{equation}\label{LaplacStatic}
\Delta f= -\frac{R}{n-1}f.
\end{equation}

Moreover, we also rewrite equation (\ref{eqfundstatic})  as follows
\begin{equation}\label{RicHesstatic}
f\mathring{Ric}=\mathring{\nabla^2 f}.
\end{equation}

We also have $|\nabla f|$ a positive constant on $\Sigma$ because $0$ is a regular value of $f$ as well as $M^n$ has constant scalar curvature $R$ (Corvino et al. \cite{CM}). Here we will  assume $R$ positive obeying $R=n(n-1)$. The difference with the former case is that the boundary $\Sigma$ now is totally geodesic. In fact, from (\ref{eqfundstatic}) and (\ref{LaplacStatic}), the second fundamental form of $\Sigma$ turns out
\begin{eqnarray}\label{secformStatic}
 h_{ij}&=& -\frac{1}{|\nabla f|}\big(R_{ij}-\frac{R}{n-1}g_{ij}\big)f ,
\end{eqnarray}for an adapted orthonormal frame $\{e_{1}, \ldots, e_{n-1}, \nu=-\frac{\nabla f}{|\nabla f|}\}$ associated to $\Sigma$. So, the Gauss equation becomes
\begin{equation}\label{eqGaussStatic}
2Ric(\nu, \nu) = R - R^{\Sigma},
\end{equation}where $R^{\Sigma}$, as before,  is the scalar curvature of $\Sigma$.

With these informations  we are in position to present the crucial results  which will be important to arrive at our main focus.

From equation (\ref{RicHess}) or \eqref{RicHesstatic} we can check that the following identity holds for the metrics considered here:
\begin{equation}
\label{divric}
f|\mathring{Ric}|^2 = \langle \mathring{Ric}, \mathring{\nabla^2 f}\rangle = div(\mathring{Ric}(\nabla f)).
 \end{equation}Thus, integrating \eqref{divric} on $M$, using that $|\nabla f|$ is constant on $\Sigma$, and applying the divergence theorem we deduce the following lemma, which was proved previously in \cite{BDRR}.

\begin{lemma}\label{intfE}Let $(M^n ,g, f)$ be a compact, oriented, connected Miao-tam critical metric (or static triple) with connected smooth boundary $\Sigma$. Then,
	\begin{equation*}
	\int_{M}f|\mathring{Ric}|^2 dV_{g} = -\frac{1}{|\nabla f|}\int_{\Sigma}\mathring{Ric}(\nabla f, \nabla f) ds.
	\end{equation*}
\end{lemma}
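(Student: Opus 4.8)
The plan is to integrate the pointwise identity \eqref{divric} over $M$ and invoke the divergence theorem, the only subtlety being the correct treatment of the resulting boundary term through the outward unit normal $\nu$.

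First I would recall that \eqref{divric} furnishes, pointwise on $M$, the equality $f|\mathring{Ric}|^2 = div(\mathring{Ric}(\nabla f))$. Integrating both sides over the compact manifold $M$ and applying the divergence theorem gives
\[
\int_M f|\mathring{Ric}|^2\, dV_g = \int_M div\big(\mathring{Ric}(\nabla f)\big)\, dV_g = \int_\Sigma \big\langle \mathring{Ric}(\nabla f),\, \nu\big\rangle\, ds,
\]
where $\nu$ denotes the outward unit normal to $\Sigma=\partial M$. Next I would substitute the expression $\nu=-\frac{\nabla f}{|\nabla f|}$ recorded in Section \ref{Preliminaries}; since $|\nabla f|$ is constant along $\Sigma$, it factors out of the surface integral, yielding
\[
\int_\Sigma \big\langle \mathring{Ric}(\nabla f),\, \nu\big\rangle\, ds = -\frac{1}{|\nabla f|}\int_\Sigma \big\langle \mathring{Ric}(\nabla f),\, \nabla f\big\rangle\, ds = -\frac{1}{|\nabla f|}\int_\Sigma \mathring{Ric}(\nabla f,\nabla f)\, ds,
\]
which is precisely the asserted identity. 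The very same computation applies \emph{mutatis mutandis} to a static triple, because \eqref{RicHesstatic} delivers the identical tensorial relation $f\mathring{Ric}=\mathring{\nabla^2 f}$ from which \eqref{divric} was obtained.

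Since the pointwise identity \eqref{divric} is already in hand, there is no genuine analytic obstacle; the only points demanding care are the sign arising from the orientation of $\nu$ relative to $\nabla f$ and the fact—established in the preliminaries—that $|\nabla f|$ is constant along $\Sigma$, so that it may be pulled out of the integral. For completeness I would note that the derivation of \eqref{divric} itself rests on the contracted second Bianchi identity, which gives $div\,\mathring{Ric}=\tfrac12\nabla R-\tfrac1n\nabla R=0$ whenever $R$ is constant; this hypothesis holds in both the Miao-Tam and the static settings, so the identity and hence the lemma are valid in each case.
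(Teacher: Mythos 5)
Your proposal is correct and follows exactly the route the paper takes: integrate the pointwise identity \eqref{divric} over $M$, apply the divergence theorem, and use that $\nu=-\nabla f/|\nabla f|$ with $|\nabla f|$ constant on $\Sigma$ to produce the boundary term. The extra remarks on the sign of $\nu$ and on $div\,\mathring{Ric}=0$ via the contracted Bianchi identity are accurate and only make explicit what the paper leaves implicit.
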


In order to prove Theorem \ref{ThAreaEst} we need the following proposition which can be found in \cite{BLF}. But, for sake of completeness we include its proof here.
\begin{proposition}\label{intRbound}
	Let $(M^n, g, f)$, $n\geq 3$, be a compact, oriented, connected Miao-Tam critical metric with connected smooth boundary $\Sigma$ and scalar curvature $R= n(n-1)\varepsilon$, where $\varepsilon = -1, 0, 1$. Then the following identity occurs
\begin{equation*}
	\int_{\Sigma} R^{\Sigma} ds = 2H \int_{M} f|\mathring{Ric}|^2 dV_{g} + C(R)|\Sigma|,
	\end{equation*}where $C(R)$ is given by \eqref{cr}.
\end{proposition}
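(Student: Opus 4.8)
The plan is to reduce the claimed identity to a boundary computation by combining Lemma \ref{intfE} with the traced Gauss equation \eqref{eqGauss}. First I would invoke Lemma \ref{intfE} to rewrite the bulk integral $\int_{M}f|\mathring{Ric}|^2\,dV_{g}$ as the boundary integral $-\frac{1}{|\nabla f|}\int_{\Sigma}\mathring{Ric}(\nabla f,\nabla f)\,ds$. Since the outward unit normal is $\nu=-\nabla f/|\nabla f|$, we have $\mathring{Ric}(\nabla f,\nabla f)=|\nabla f|^2\,\mathring{Ric}(\nu,\nu)$, and using $H=1/|\nabla f|$ from \eqref{meancurv} the right-hand side collapses to $-\frac{1}{H}\int_{\Sigma}\mathring{Ric}(\nu,\nu)\,ds$. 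Multiplying through by $2H$ then gives $2H\int_{M}f|\mathring{Ric}|^2\,dV_{g}=-2\int_{\Sigma}\mathring{Ric}(\nu,\nu)\,ds$, which isolates the bulk term as a normal-direction boundary integral.

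Next I would expand the traceless Ricci tensor on $\Sigma$ as $\mathring{Ric}(\nu,\nu)=Ric(\nu,\nu)-\frac{R}{n}$, so that the boundary integral becomes $-2\int_{\Sigma}Ric(\nu,\nu)\,ds+\frac{2R}{n}|\Sigma|$. The key geometric input at this stage is the traced Gauss equation \eqref{eqGauss}, which for the totally umbilic boundary reads $2Ric(\nu,\nu)=R+\frac{n-2}{n-1}H^2-R^{\Sigma}$. Substituting this and recalling that $H$ is constant along $\Sigma$ yields $-2\int_{\Sigma}Ric(\nu,\nu)\,ds=\int_{\Sigma}R^{\Sigma}\,ds-\big(R+\frac{n-2}{n-1}H^2\big)|\Sigma|$.

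Combining the two displays, I would arrive at $\int_{\Sigma}R^{\Sigma}\,ds=2H\int_{M}f|\mathring{Ric}|^2\,dV_{g}+\big(\frac{n-2}{n}R+\frac{n-2}{n-1}H^2\big)|\Sigma|$, leaving only the verification that the bracketed coefficient equals $C(R)$. Under the normalization $R=n(n-1)\varepsilon$ one computes $\frac{n-2}{n}R=(n-2)(n-1)\varepsilon$, so the coefficient rewrites as $\frac{n-2}{n-1}H^2+(n-2)(n-1)\varepsilon=\frac{n-2}{n-1}\big(H^2+(n-1)^2\varepsilon\big)$, which is exactly the expression \eqref{cr}. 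The whole argument is a short chain of algebraic substitutions into identities already established in the Preliminaries, so there is no genuine analytic obstacle; the only points demanding care are the sign in the normal direction $\nu=-\nabla f/|\nabla f|$ and the reconciliation of the two equivalent forms of $C(R)$, both of which I would track explicitly to guard against a factor error.
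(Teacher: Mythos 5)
Your proposal is correct and follows essentially the same route as the paper: Lemma \ref{intfE} converts the bulk integral into a boundary integral of $\mathring{Ric}(\nu,\nu)$, the traced Gauss equation \eqref{eqGauss} eliminates $Ric(\nu,\nu)$ in favor of $R^{\Sigma}$, and the remaining algebra reconciles the coefficient with \eqref{cr}. The only (immaterial) difference is that you keep $R$ general until the final step, while the paper substitutes $R=n(n-1)\varepsilon$ from the outset.
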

\begin{proof}
	From Lemma \ref{intfE}  and Gauss equation (\ref{eqGauss}) we get
	\begin{eqnarray*}
	\int_{M}f|\mathring{Ric}|^2 dV_{g} &=& -\frac{1}{H}\int_{\Sigma}Ric(\nu, \nu) dV_{g} + \frac{1}{H}\int_{\Sigma}\varepsilon (n-1) ds\\
	&=&-\frac{1}{H}\int_{\Sigma}\frac{1}{2}\Big(\varepsilon n(n-1) - R^{\Sigma} +\frac{n-2}{n-1}H^2\Big)ds + \frac{1}{H}\varepsilon (n-1)|\Sigma|\\
	&=&\frac{1}{2H}\int_{\Sigma}R^{\Sigma} ds - \frac{1}{2H}\Big(\varepsilon (n-1)(n-2)+\frac{n-2}{n-1}H^2\Big)|\Sigma|.
	\end{eqnarray*}
	Therefore,
	$$\int_{\Sigma} R^{\Sigma} ds = 2H \int_{M} f|\mathring{Ric}|^2 dV_{g} + \frac{n-2}{n-1}\big(H^2+\varepsilon (n-1)^2\big)|\Sigma|,$$ as desired.
\end{proof}

Applying again  Gauss equation (\ref{eqGaussStatic}) in Lemma \ref{intfE} we deduce a similar result to static triple.
\begin{proposition}\label{intRboundStatic}
	Let $(M^n, g, f)$, $n\geq 3$, be a compact, oriented, connected static triple with connected smooth boundary $\Sigma$ and scalar curvature $R=n(n-1)$. Then we deduce
	\begin{equation*}
	\int_{\Sigma} R^{\Sigma} ds = \frac{2}{|\nabla f|}\int_{M} f|\mathring{Ric}|^2 dV_{g} + (n-1)(n-2)|\Sigma|.
	\end{equation*}
\end{proposition}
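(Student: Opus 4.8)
The plan is to follow the same strategy used in Proposition \ref{intRbound}, now feeding the static Gauss equation \eqref{eqGaussStatic} into Lemma \ref{intfE}. The starting point is the integral identity
\begin{equation*}
\int_{M}f|\mathring{Ric}|^2 dV_{g} = -\frac{1}{|\nabla f|}\int_{\Sigma}\mathring{Ric}(\nabla f, \nabla f)\, ds.
\end{equation*}
Since the outward unit normal on $\Sigma$ is $\nu=-\frac{\nabla f}{|\nabla f|}$, I would first rewrite the boundary integrand as $\mathring{Ric}(\nabla f,\nabla f)=|\nabla f|^2\,\mathring{Ric}(\nu,\nu)$, so that the $|\nabla f|$ factors partially collapse and I am left with $\frac{1}{|\nabla f|}\int_{M}f|\mathring{Ric}|^2 dV_{g}=-\int_{\Sigma}\mathring{Ric}(\nu,\nu)\,ds$.

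The next step is to decompose the normal--normal component of the traceless Ricci tensor as $\mathring{Ric}(\nu,\nu)=Ric(\nu,\nu)-\frac{R}{n}$ and substitute $R=n(n-1)$, which gives $\mathring{Ric}(\nu,\nu)=Ric(\nu,\nu)-(n-1)$. I would then invoke the static Gauss equation \eqref{eqGaussStatic}, namely $2Ric(\nu,\nu)=R-R^{\Sigma}$, to express $Ric(\nu,\nu)=\tfrac{1}{2}\big(n(n-1)-R^{\Sigma}\big)$. Combining the two relations yields $\mathring{Ric}(\nu,\nu)=\tfrac{(n-1)(n-2)}{2}-\tfrac{R^{\Sigma}}{2}$.

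Finally, multiplying the identity from the first paragraph by $2$ and inserting this expression for $\mathring{Ric}(\nu,\nu)$ gives
\begin{equation*}
\frac{2}{|\nabla f|}\int_{M}f|\mathring{Ric}|^2 dV_{g}=\int_{\Sigma}R^{\Sigma}\,ds-(n-1)(n-2)|\Sigma|,
\end{equation*}
which rearranges into the claimed identity. The computation is essentially routine once the correct substitutions are made; the only point requiring care is that, unlike the Miao--Tam case of Proposition \ref{intRbound} where the boundary is umbilic and the relation $H=1/|\nabla f|$ converts all boundary factors into powers of $H$, here $\Sigma$ is totally geodesic, so the curvature term $\frac{n-2}{n-1}H^2$ present in \eqref{eqGauss} is absent from \eqref{eqGaussStatic} and the factor $|\nabla f|$ must instead be carried explicitly throughout. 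Tracking this factor correctly is the one place where a sign or normalization slip would propagate, so I would double-check the final expression against the formal $\varepsilon=1$, $H\to 0$ degeneration of Proposition \ref{intRbound}.
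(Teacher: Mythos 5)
Your proof is correct and follows exactly the route the paper takes (the paper merely states that one applies the static Gauss equation \eqref{eqGaussStatic} in Lemma \ref{intfE}, which is precisely your computation): the substitution $\mathring{Ric}(\nu,\nu)=\tfrac{(n-1)(n-2)}{2}-\tfrac{R^{\Sigma}}{2}$ and the factor-of-$2$ bookkeeping all check out. Your closing remark correctly flags the only subtlety, namely that in the static case the prefactor is $2/|\nabla f|$ rather than $2H$ since $\Sigma$ is totally geodesic.
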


Finally, we announce one of the fundamental source due to Ilias \cite{Illias} that will be used in the proof of our  main theorem.
Before, let us introduce the following constants:
\begin{equation}\label{kn}
	K(n,2)= \sqrt{\frac{4}{n(n-2)\omega_{n}^{2/n}}},
	\end{equation}where  $\omega_{n} = |\mathbb{S}^n|,$ as well as
\begin{equation}
\label{infric}\mathcal{R}(M, g) = \inf\{Ric(V, V) \;|\; V\in TM, |V|_{g}=1\}.
\end{equation} Then we get the following result.

\begin{theorem}\cite{Illias}\label{thIlias}
	Let $(M^n , g),\,n\ge 3,$ be a compact riemannian manifold without boundary. Suppose that $\mathcal{R}(M, g)\geq \mathcal{R}(\mathbb{S}^n , \frac{1}{\delta}g_{can})=(n-1)\delta>0$, then
	\begin{equation*}
	\Big(\int_{M} |f|^{\frac{2n}{n-2}} dV_{g}\Big)^{\frac{n-2}{n}}\leq [K(n,2)]^2 \Big(\frac{\omega_{n}(\delta)}{|M|}\Big)^{\frac{2}{n}} \int_{M}|\nabla f|^2 dV_{g} + |M|^{-\frac{2}{n}}\int_{M} |f|^2 dV_{g},
	\end{equation*}
	for all $f\in H^{1,2}(M)$, where $\omega_{n}(\delta)=\delta^{-\frac{n}{2}}\omega_{n}$.
	
\end{theorem}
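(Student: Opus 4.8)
The plan is to reduce the inequality on $(M^n,g)$ to the \emph{sharp} Sobolev inequality on the round model sphere $\mathbb{S}^n(\delta):=(\mathbb{S}^n,\tfrac{1}{\delta}g_{can})$ of constant curvature $\delta$ by a rearrangement (symmetrization) argument, following Bérard--Besson--Gallot and Ilias \cite{Illias}. The curvature hypothesis $\mathcal{R}(M,g)\ge \mathcal{R}(\mathbb{S}^n,\tfrac{1}{\delta}g_{can})=(n-1)\delta$ means precisely $Ric\ge (n-1)\delta\,g$, and it will enter at exactly one place, namely an isoperimetric comparison with the model. As a first step I would record the sharp Sobolev inequality on $\mathbb{S}^n(\delta)$,
\begin{equation*}
\Big(\int_{\mathbb{S}^n(\delta)}|u|^{\frac{2n}{n-2}}\Big)^{\frac{n-2}{n}}\le [K(n,2)]^2\int_{\mathbb{S}^n(\delta)}|\nabla u|^2 + |\mathbb{S}^n(\delta)|^{-\frac{2}{n}}\int_{\mathbb{S}^n(\delta)}|u|^2,
\end{equation*}
valid for all $u\in H^{1,2}(\mathbb{S}^n(\delta))$. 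This follows from the optimal Euclidean Sobolev inequality of Aubin and Talenti (with the very constant $K(n,2)$ of \eqref{kn}) via stereographic projection; since the gradient constant is scale invariant and $|\mathbb{S}^n(\delta)|=\omega_n(\delta)$, the displayed form holds verbatim for every $\delta>0$.

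Next I would invoke the isoperimetric comparison. Because $Ric\ge (n-1)\delta\,g>0$, Bonnet--Myers guarantees $M$ is compact and the L\'evy--Gromov inequality applies: for every smooth domain $\Omega\subset M$,
\begin{equation*}
\frac{|\partial\Omega|}{|M|}\ \ge\ \frac{|\partial\Omega^{*}|}{|\mathbb{S}^n(\delta)|},
\end{equation*}
where $\Omega^{*}\subset\mathbb{S}^n(\delta)$ is the geodesic cap chosen so that $\tfrac{|\Omega^{*}|}{|\mathbb{S}^n(\delta)|}=\tfrac{|\Omega|}{|M|}$. This is the only step that uses the Ricci bound.

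With these ingredients in hand I would symmetrize. Fix $f\in H^{1,2}(M)$, which by density and truncation I may take smooth and nonnegative, and set $\mu(t)=|\{f>t\}|$. Define the spherically symmetric rearrangement $f^{*}$ on $\mathbb{S}^n(\delta)$ by declaring $\{f^{*}>t\}$ to be the geodesic cap of relative volume $\mu(t)/|M|$. Equimeasurability then gives $\int_{\mathbb{S}^n(\delta)}|f^{*}|^{p}=\tfrac{|\mathbb{S}^n(\delta)|}{|M|}\int_{M}|f|^{p}$ for every $p$, while the coarea formula combined with the Cauchy--Schwarz inequality on the level sets and the isoperimetric comparison yields the P\'olya--Szeg\H{o} inequality in normalized form,
\begin{equation*}
\frac{1}{|\mathbb{S}^n(\delta)|}\int_{\mathbb{S}^n(\delta)}|\nabla f^{*}|^2\ \le\ \frac{1}{|M|}\int_{M}|\nabla f|^2.
\end{equation*}
Applying the model inequality to $u=f^{*}$, substituting these two relations, writing $W=\omega_n(\delta)$ and $V=|M|$, and dividing through by $(W/V)^{(n-2)/n}$, the volume factors recombine to produce exactly the coefficients $[K(n,2)]^2(W/V)^{2/n}$ in front of $\int_M|\nabla f|^2$ and $V^{-2/n}$ in front of $\int_M|f|^2$, which is the asserted inequality \eqref{thIlias}.

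The main obstacle is the P\'olya--Szeg\H{o} step. The isoperimetric comparison controls only the perimeter $\int_{\{f=t\}}d\mathcal{H}^{n-1}$, whereas the energy requires $\int|\nabla f|^2$; bridging the two rests on the coarea identity $-\mu'(t)=\int_{\{f=t\}}|\nabla f|^{-1}\,d\mathcal{H}^{n-1}$ together with Cauchy--Schwarz, with equality on the model precisely because $|\nabla f^{*}|$ is constant along each geodesic sphere. One must also establish the absolute continuity of $\mu$, discard the critical levels where $\nabla f$ vanishes, and justify the final passage from smooth to general $H^{1,2}$ functions — routine analysis, but where the care of the argument lies.
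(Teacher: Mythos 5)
Theorem \ref{thIlias} is quoted in the paper directly from Ilias \cite{Illias} without proof, so there is no internal argument to compare against; measured against the cited source, your proposal reconstructs essentially Ilias's original proof (L\'evy--Gromov isoperimetric comparison with the model sphere $\mathbb{S}^n(\delta)$, normalized P\'olya--Szeg\H{o} via coarea and Cauchy--Schwarz, and the sharp Sobolev inequality on the sphere with the Aubin--Talenti constant $K(n,2)$). Your scaling check for arbitrary $\delta>0$ and the final volume-factor bookkeeping --- dividing by $(W/V)^{(n-2)/n}$ to recover the coefficients $[K(n,2)]^2(\omega_n(\delta)/|M|)^{2/n}$ and $|M|^{-2/n}$ --- are both correct, and you rightly identify the P\'olya--Szeg\H{o} step, together with the absolute continuity of the distribution function and the density argument, as the only places requiring genuine care.
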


\section{Proofs of the Main Results}

\subsection{\textbf{Proof of Theorem \ref{ThAreaEst}}}

In order to prove Theorem \ref{ThAreaEst} the previous result due to Ilias (\cite{Illias}, Theorem 3) is fundamental. Indeed, as the boundary $\Sigma$ is an Einstein manifold with $R^{\Sigma}>0$, we take $\delta=\frac{R^{\Sigma}}{(n-1)(n-2)}>0$ to obtain
 $$\mathcal{R}(\Sigma, g_{\Sigma}) = \inf\{Ric^{\Sigma}(V, V) \;|\; V\in T\Sigma, |V|_{g}=1\}= (n-2)\delta.$$ Therefore,  applying  to $\Sigma^{n-1}$ the quoted theorem due to  Ilias we deduce
\begin{eqnarray}\label{IneqSob}
&&\\
\nonumber\Big(\int_{\Sigma} |\varphi|^{\frac{2(n-1)}{n-3}} ds\Big)^{\frac{n-3}{n-1}}&\leq& [K(n-1,2)]^2 \Big(\frac{\omega_{n-1}(\delta)}{|\Sigma|}\Big)^{\frac{2}{n-1}} \int_{\Sigma}|\nabla \varphi|^2 ds + |\Sigma|^{-\frac{2}{n-1}}\int_{\Sigma} |\varphi|^2 ds,
\end{eqnarray}for all $\varphi\in H^{1,2}(\Sigma)$, where $\omega_{n-1}(\delta)=\delta^{-\frac{n-1}{2}}\omega_{n-1}$, $\omega_{n-1} = |\mathbb{S}^{n-1}|$ and $K(n-1,2)$ obeying \eqref{kn} is the best constant for inequalities of the type:
\begin{equation}\label{sob1}
\Big(\int_{\Sigma} |\varphi|^{p} ds\Big)^{\frac{1}{p}}\leq A \Big(\int_{\Sigma}|\nabla \varphi|^{q} ds\Big)^{\frac{1}{q}} + B\Big(\int_{\Sigma} |\varphi|^{q} ds\Big)^{\frac{1}{q}},
\end{equation}where $\frac{1}{p}=\frac{1}{q}-\frac{1}{n-1}$, $1\leq q <n-1$ and $q\in\mathbb{R}$. From (\ref{IneqSob}) we have

\begin{eqnarray*}
\Big(\int_{\Sigma} |\varphi|^{\frac{2(n-1)}{n-3}} ds\Big)^{\frac{n-3}{2(n-1)}}\leq [K(n-1,2)] \Big(\frac{\omega_{n-1}(\delta)}{|\Sigma|}\Big)^{\frac{1}{n-1}}\Big( \int_{\Sigma}|\nabla \varphi|^2 ds\Big)^{\frac{1}{2}} + |\Sigma|^{-\frac{1}{n-1}}\Big(\int_{\Sigma} |\varphi|^2 ds\Big)^{\frac{1}{2}}.
\end{eqnarray*}

Next we compare this last inequality with relation \eqref{sob1} to infer that the coefficient of $K(n-1,2)$ must be greater than or equal to $1$, i.e. $\Big(\frac{\omega_{n-1}(\delta)}{|\Sigma|}\Big)^{\frac{1}{n-1}} \geq 1$. Taking into account that  $\omega_{n-1}(\delta)=\delta^{-\frac{n-1}{2}}\omega_{n-1}$ we arrive at
\begin{equation}\label{ineqwsigma}
(\omega_{n-1})^{\frac{2}{n-1}} \geq \delta |\Sigma|^{\frac{2}{n-1}}.
\end{equation}

Thus, substituting $\delta=\frac{R^{\Sigma}}{(n-1)(n-2)}$ in Eq. (\ref{ineqwsigma}) we have
$(n-1)(n-2)(\omega_{n-1})^{\frac{2}{n-1}}\geq R^{\Sigma} |\Sigma|^{\frac{2}{n-1}}.$ Whence, using \eqref{yamabesn} we deduce
\begin{eqnarray}\label{ineqYam}
	Y(\mathbb{S}^{n-1}, [g_{can}]) \geq R^{\Sigma} |\Sigma|^{\frac{2}{n-1}}.
\end{eqnarray}

Now, on integrating (\ref{ineqYam}) on $\Sigma$ and using Proposition \ref{intRbound} we conclude
\begin{eqnarray}\label{ineq}
\nonumber Y(\mathbb{S}^{n-1}, [g_{can}])|\Sigma|^{\frac{n-3}{n-1}} &\geq& \int_{\Sigma} R^{\Sigma} ds\\
 \nonumber&=& 2H \int_{M} f|\mathring{Ric}|^2 dV_{g} + \frac{n-2}{n-1}\big( H^2+\varepsilon (n-1)^2 \big)|\Sigma|\\
 &\geq& C(R)|\Sigma|.
\end{eqnarray}

Since $C(R)>0$ we conclude that
\begin{equation}\label{ineqSigY}
|\Sigma|^{\frac{2}{n-1}} \leq \frac{Y(\mathbb{S}^{n-1}, [g_{can}])}{C(R)}.
\end{equation}

Finally, from (\ref{ineq}) note that occurring  equality in (\ref{ineqSigY}) we must have  $$\int_{M} f|\mathring{Ric}|^2 dV_{g}=0.$$ That is, $(M^n,g)$ is an Einstein manifold. Then we arrive at the assumptions of Theorem \ref{thmEinst} to conclude that $M^n$ is isometric to a geodesic ball in a simply connected space form $\mathbb{R}^{n}$, $\mathbb{S}^{n}$ or $\mathbb{H}^{n}$. Conversely, if $(M^n,g)$ is isometric to a geodesic ball in a simply connected space form $\mathbb{R}^{n}$, $\mathbb{S}^{n}$ or $\mathbb{H}^{n}$, then $|\mathring{Ric}|=0$ and $\Sigma$ is a sphere $\mathbb{S}^{n-1}(r)$ of radius $r$. From Proposition \ref{intRbound} we must have
\begin{equation}
R^{\Sigma}=(n-1)(n-2)\varepsilon +\frac{n-2}{n-1}H^2.
\end{equation}
Whence
\begin{equation}\label{RicSigma}
Ric^{\Sigma}=\frac{R^\Sigma}{n-1}g_{\Sigma}=(n-2)\Big(\frac{H^2+(n-1)^2\varepsilon}{(n-1)^2} \Big)g_{\Sigma}.
\end{equation}

Therefore, from (\ref{RicSigma}) we have $\delta =\frac{H^2+ (n-1)^2\varepsilon}{(n-1)^2}$ and $r=\delta^{-1/2}$. Thus we infer
\begin{eqnarray*}
C(R)|\Sigma|^{\frac{2}{n-1}}&=& (n-2)\big(\frac{H^2+(n-1)^2\varepsilon}{(n-1)} \big)|\mathbb{S}^{n-1}(r)|^{\frac{2}{n-1}}\\
&=& (n-2)\big(\frac{H^2+(n-1)^2\varepsilon}{(n-1)} \big)r^2 (\omega_{n-1})^{\frac{2}{n-1}}\\
&=&(n-2)\big(\frac{H^2+(n-1)^2\varepsilon}{(n-1)} \big)\delta^{-1} (\omega_{n-1})^{\frac{2}{n-1}}\\
&=&Y(\mathbb{S}^{n-1}, [g_{can}]),
\end{eqnarray*}
as desired.

\subsection{\textbf{Proof of Corollary \ref{CorAnswerMiao}}}

It suffices to note that the choice of the radius  $$r=\Big(\frac{(n-1)(n-2)}{C(R)}\Big)^{1/2}= \Big(\frac{(n-1)^2}{H^2 +(n-1)^2 \varepsilon}\Big)^{1/2}$$ gives $r=\delta^{-1/2}$, which implies  equality in  the previous theorem.

\subsection{\textbf{Proof of Corollary \ref{CorEstVolM}}}

On integrating \eqref{eqtrace} on $M$, and using equation \eqref{meancurv} we arrive at the following identity
\begin{eqnarray*}
\nonumber |\Sigma| &=& \frac{nH}{n-1}|M| + \frac{RH}{n-1}\int_{M} f dV_{g}.\\
\end{eqnarray*}

Whence, since $R\ge 0$ we derive the next inequality
\begin{equation}\label{DesSigma}
|\Sigma| \geq\frac{nH}{n-1}|M|.
\end{equation}Moreover, equality is attained  if and only if $R=0$, since $H>0$ as well as $\int_{M} f dV_{g}>0.$

Now we claim that
\begin{equation}\label{EstVolM2}
\Big(\frac{nH}{n-1}\Big)^{\frac{2}{n-1}}|M|^{\frac{2}{n-1}}\le |\Sigma|^{\frac{2}{n-1}}\leq \frac{Y(\mathbb{S}^{n-1}, [g_{can}])}{C(R)}.
\end{equation}
Indeed, the first inequality follows from (\ref{DesSigma}), whereas the second one comes from Theorem \ref{ThAreaEst}. So, we establish the inequality of the corollary.
Moreover, equality occurs  in the quoted corollary if and only if it also occurs in each step of the last chain of inequalities. Now, it suffices to use identity \eqref{DesSigma}, and Theorem \ref{ThAreaEst} to conclude the proof of the corollary.

\subsection{\textbf{Proof of Theorem \ref{ThStatic}}}

The proof of Theorem \ref{ThStatic} is very similar to that one presented in Theorem \ref{ThAreaEst}. Indeed, the assumption on the boundary $\Sigma$, as well as the steps used in the proof of Theorem \ref{ThAreaEst}, allow us to conclude that
\begin{eqnarray}\label{estOmegSigmStatic}
	(n-1)(n-2)(\omega_{n-1})^{\frac{2}{n-1}}\geq R^{\Sigma} |\Sigma|^{\frac{2}{n-1}}.
\end{eqnarray}

On integrating (\ref{estOmegSigmStatic}) on $\Sigma$  we have
\begin{equation*}
(n-1)(n-2)(\omega_{n-1})^{\frac{2}{n-1}}|\Sigma|^{\frac{n-3}{n-1}}\geq \int_{\Sigma} R^{\Sigma} ds.\\
\end{equation*}
According to Proposition \ref{intRboundStatic}, we have $ \int_{\Sigma} R^{\Sigma} ds=\frac{2}{|\nabla f|}\int_{M} f|\mathring{Ric}|^2 dV_{g} + (n-1)(n-2)|\Sigma|.$ So, we arrive at
\begin{equation}
\label{ineqthm4a}
(n-1)(n-2)(\omega_{n-1})^{\frac{2}{n-1}}|\Sigma|^{\frac{n-3}{n-1}}\ge \frac{2}{|\nabla f|}\int_{M} f|\mathring{Ric}|^2 dV_{g} + (n-1)(n-2)|\Sigma|.
\end{equation}
Once more, we use $\int_{M} f|\mathring{Ric}|^2 dV_{g}\ge0$, to deduce
\begin{equation}\label{ineqthm4}|\Sigma|\leq \omega_{n-1}.
\end{equation}

Moreover, if $|\Sigma|= \omega_{n-1}$, then from \eqref{ineqthm4a} we must have
\begin{equation*}
\frac{2}{|\nabla f|}\int_{M} f|\mathring{Ric}|^2 dV_{g}=0,
\end{equation*}i.e., $(M^n, g)$ is an Einstein manifold. Next we use equation (\ref{RicHesstatic}) to arrive at the following boundary problem:
\begin{equation}
\label{pvi1}
\mathring{\nabla^2 f}\big|_{M^n}=0,  \qquad f|_{\Sigma}=0.
\end{equation}

But, using Theorem B in \cite{Reilly} we conclude that $(M^n, g)$ is isometric to a round hemisphere $(\mathbb{S}^{n}_{+}, g_{can})$ as desired.

\end{document}